\newtheorem{theorem}{Theorem}[section]
\newtheorem{lemma}[theorem]{Lemma}
\theoremstyle{definition}
\theoremstyle{remark}
\numberwithin{equation}{section}
\definecolor{forestgreen}{cmyk}{0.91,0,0.88,0.12}
\definecolor{darkorange}{rgb}{1.00,0.55,0.00}
\definecolor{violet}{rgb}{0.5,0,0.8}
\newcommand{\R}{\mathbb R}
\begin{document}

\title[Singular solutions of elliptic equations]{Singular solutions of elliptic equations with iterated exponentials}

%    Information for first author
\author{Marius Ghergu}

\address{Institute of Mathematics “Simion Stoilow” of the Romanian Academy, P.O. Box 1-764, 014700 Bucharest, Romania}
%    Address of record for the research reported here
\address{School of Mathematics and Statistics, University College Dublin, Belfield, Dublin 4, Ireland}

%Rouge, Louisiana 70803}
%    Current address
%\curraddr{Department of Mathematics and Statistics,
%Case Western Reserve University, Cleveland, Ohio 43403}
\email{marius.ghergu@ucd.ie}
%    \thanks will become a 1st page footnote.
%\thanks{The first author was supported in part by NSF Grant \#000000.}

%    Information for second author
\author{Olivier Goubet}
\address{Laboratoire Ami\'enois de Math\'ematiques Fondamentales et Appliqu\'ee, (LAMFA UMR 7352 CNRS UPJV)
33 rue saint-Leu, Université de Picardie Jules Verne
80039 Amiens
France}
\email{olivier.goubet@u-picardie.fr}
%\thanks{Support information for the second author.}

%    General info
\subjclass[2010]{Primary 35J61, 35J75; Secondary 35B40}

%\date{January 1, 2001 and, in revised form, June 22, 2001.}

%\dedicatory{This paper is dedicated to our advisors.}

\keywords{Singular solutions, prescribed singularity, iterated exponentials}

\begin{abstract}
We construct positive singular solutions for the problem $-\Delta u=\lambda \exp (e^u)$ in $B_1\subset \R^n$ ($n\geq 3$), $u=0$ on $\partial B_1$,  having a prescribed behaviour around the origin. Our study extends the one in Y. Miyamoto [Y. Miyamoto, A limit equation and bifurcation diagrams of semilinear elliptic equations with general supercritical growth. J. Differential Equations \textbf{264} (2018),  2684--2707] for such nonlinearities. Our approach is then carried out to elliptic equations featuring iterated exponentials.
\end{abstract}

\maketitle

\section{Introduction and the main results}
Consider the problem
\begin{equation}\label{main}
\left\{
\begin{aligned}
-\Delta u&=\lambda \exp({e^u})&&\quad\mbox{ in }B_1\setminus\{0\},\\
u&=0&&\quad\mbox{ on }\partial B_1,
\end{aligned}
\right.
\end{equation}
where $B_1\subset\R^n$ $(n\geq 3)$ is the open unit ball, $\lambda>0$ is a real number and
$$\exp({e^u})=e^{e^u}.
$$

The related problem, also known as the {\it Gelfand problem}, namely
\begin{equation}\label{gelfand}
\left\{
\begin{aligned}
-\Delta u&=\lambda {e^u} &&\quad\mbox{ in }B_1\setminus\{0\},\\
u&=0 &&\quad\mbox{ on }\partial B_1,
\end{aligned}
\right.
\end{equation}
has been long investigated starting with J. Liouville since 1853 (see  \cite{L1853}). One particular feature of \eqref{gelfand} is that for $\lambda=2(n-2)>0$ it has the explicit singular solution $u_s(x)=-2\ln|x|$.
Joseph and Lundgren \cite{JL1972} completely determined the structure of the radial solutions of \eqref{gelfand} emphasizing the role of the singular solution $u_s$ in the global picture of the solution set to \eqref{gelfand}. Thanks to the standard Hardy inequality, the  explicit singular solution $u_s$ is stable for all space dimensions $n\geq 10$. Further studies related to \eqref{gelfand} are contained in \cite{DD2007, GL2015, M2018b, MP1980, MP1988, P1990} and in the monograph \cite{Dbook}. Problems with exponential nonlinearities also appear in other contexts involving higher order operators \cite{AGGM2005, AGG2006, BGM2006, DGGW2013}, $p$-Laplace operators \cite{GPP1994} or $k$-Hessian operators \cite{J2004, JS2002} or even systems of coupled equations \cite{DG2014, G2019}.

Returning  to \eqref{main} we point out that such a problem does not possess an explicit singular solution. However, we are able to construct a radial singular solution $u^*$ with a prescribed behaviour around the origin. We prove:

\begin{theorem}\label{th1}
There exists a unique $\lambda^*>0$ such that \eqref{main} has a singular solution $u^*$ such that, as $|x|\to 0$ we have
\begin{equation}\label{asympt1}
u^*\Big(\frac{x}{\sqrt{\lambda^*}} \Big)=\ln\left[2\ln \frac{1}{|x|}+\ln\frac{n-2}{\ln\frac{1}{|x|}}+\ln\left(1+\frac{\ln \ln\frac{1}{|x|}}{2\ln\frac{1}{|x|}}\right)  \right]+O\Big(\ln^{-2}\frac{1}{|x|}\Big),
\end{equation}
and

\begin{equation}\label{gradi}
\frac{1}{\sqrt{\lambda^*}}\big|\nabla u^*\big|\Big(\frac{x}{\sqrt{\lambda^*}}\Big)=
\frac{1}{|x|\ln\frac{1}{|x|}}+\frac{\ln \Big(\ln\frac{1}{|x|}\Big)}{2|x| \ln^2 \frac{1}{|x|}}+O\Big( \frac{1}{|x|\ln^2\frac{1}{|x|}}\Big).
\end{equation}

\end{theorem}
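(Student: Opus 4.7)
The plan is to recast the problem as constructing a radial singular solution of the rescaled equation $-\Delta v=\exp(e^v)$ on a ball of unknown radius, then read off $\lambda^*$ from the radius at which that solution vanishes. Setting $v(y)=u(y/\sqrt{\lambda})$ converts \eqref{main} into $-\Delta v=\exp(e^v)$ on $B_{\sqrt{\lambda}}$, so it suffices to produce a radial $V(r)$ satisfying
\begin{equation*}
V''(r)+\frac{n-1}{r}V'(r)+\exp(e^{V(r)})=0,\qquad r>0,
\end{equation*}
with $V\to+\infty$ as $r\to 0^+$ and $V(R_*)=0$ for some $R_*>0$; one then sets $\lambda^*:=R_*^2$ and $u^*(x):=V(R_*|x|)$.

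For the construction near $r=0$ I pass to the Emden--Fowler variable $t:=\ln(1/r)$, $W(t):=V(r)$, which yields
\begin{equation*}
W''(t)-(n-2)W'(t)+e^{-2t}\exp(e^{W(t)})=0.
\end{equation*}
A matched-asymptotic analysis based on the ansatz $e^{W}=2t+h(t)$, $h=o(t)$, pins down the correction $h$ level by level (leading $h\sim\ln\tfrac{n-2}{t}$, next $h\sim\ln(1+\tfrac{\ln t}{2t})$, etc.), producing the approximate profile
\begin{equation*}
W_0(t):=\ln\!\Big[2t+\ln\tfrac{n-2}{t}+\ln\!\Big(1+\tfrac{\ln t}{2t}\Big)\Big].
\end{equation*}
A convenient feature is that $e^{-2t}\exp(e^{W_0(t)})=\tfrac{n-2}{t}+\tfrac{(n-2)\ln t}{2t^{2}}$ in closed form, from which one checks that the residual $R(t):=W_0''-(n-2)W_0'+e^{-2t}\exp(e^{W_0})$ is $O(t^{-2})$.

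The main analytic step is to show that the exact solution $W=W_0+\zeta$ satisfies $\zeta(t)=O(t^{-2})$ as $t\to\infty$. Linearising the source around $W_0$,
\begin{equation*}
e^{-2t}\big[\exp(e^{W_0+\zeta})-\exp(e^{W_0})\big]\approx e^{-2t}\exp(e^{W_0})\,e^{W_0}\zeta\sim 2(n-2)\zeta,
\end{equation*}
so the equation for $\zeta$ is governed at leading order by $\zeta''-(n-2)\zeta'+2(n-2)\zeta=-R(t)+\mathcal N(\zeta,t)$; because the homogeneous solutions of this linear part grow exponentially while the polynomial particular solution for a source of order $t^{-2}$ is itself of order $t^{-2}$, this is what forces the $O(t^{-2})$ rate in \eqref{asympt1}. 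Converting this observation into a genuine contraction in the weighted space $X_T:=\{\zeta\in C([T,\infty)):\sup_{t\ge T}t^{2}|\zeta(t)|<\infty\}$, $T$ large, is the main technical obstacle: one must control the iterated--exponential Lipschitz term using the identity
\begin{equation*}
\exp(e^{W_0+\zeta_1})-\exp(e^{W_0+\zeta_2})=\int_0^1\exp(e^{W_0+\zeta_\tau})\,e^{W_0+\zeta_\tau}(\zeta_1-\zeta_2)\,d\tau,
\end{equation*}
with $\zeta_\tau=(1-\tau)\zeta_2+\tau\zeta_1$, together with the fact that on the unit ball of $X_T$ one has $e^{W_0}\zeta=O(t^{-1})\to 0$, so that $\exp(e^{W_0+\zeta_\tau})\asymp\exp(e^{W_0})$ uniformly, giving a contraction factor tending to $0$ as $T\to\infty$.

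With $W=W_0+\zeta$ constructed on $[T,\infty)$, standard ODE theory extends $W$ backwards in $t$ as long as it remains finite. Since $\exp(e^V)>0$, the function $r\mapsto r^{n-1}V'(r)$ is strictly decreasing and, using the singular behaviour at $0$, negative throughout, so $V$ is strictly decreasing; there is therefore a unique first zero $R_*>0$, producing $\lambda^*:=R_*^2$ and the singular solution $u^*(x):=V(R_*|x|)$ of \eqref{main}. Uniqueness of $\lambda^*$ follows from uniqueness of the singular profile in $X_T$. Finally, \eqref{asympt1} is the definition of $W_0$ together with the bound $\zeta(t)=O(t^{-2})$, while \eqref{gradi} is obtained by differentiating $W_0$ term by term, using $V'(r)=-W'(t)/r$, and verifying that the integral representation of $\zeta$ propagates to $\zeta'(t)=O(t^{-2})$.
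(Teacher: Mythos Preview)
Your proposal is correct and follows essentially the same route as the paper: rescale, pass to the Emden--Fowler variable $t=-\ln r$, take the identical approximate profile $W_0(t)=\ln(2t+\varphi(t))$ with $\varphi(t)=\ln\tfrac{n-2}{t}+\ln\bigl(1+\tfrac{\ln t}{2t}\bigr)$, absorb the linearisation $2(n-2)\zeta$ into the left-hand side, and run a contraction in the weighted space $X_T=\{\zeta:\sup_{t>T}t^{2}|\zeta(t)|<\infty\}$, then recover $\zeta'(t)=O(t^{-2})$ from the integral representation to get \eqref{gradi}. The only differences are organisational: the paper decomposes the nonlinear remainder algebraically as $F_0(t)+F_1(t)\eta+F_2(t,\eta)+F_3(t,\eta)$ rather than via your mean-value integral identity, and it locates the vanishing point of $w$ by a contradiction argument in the $t$ variable instead of your monotonicity argument for $r^{n-1}V'(r)$; both variants lead to the same conclusion.
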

Letting $\rho=\ln \frac{1}{|x|}$ and using the Maclaurin series approximation we may re-write \eqref{asympt1} as
\begin{equation}\label{asympt2}
u^*\Big(\frac{x}{\sqrt{\lambda^*}} \Big)=\ln(2\rho)+\frac{1}{2\rho}\ln\frac{n-2}{\rho}-\frac{1}{8\rho^2}\ln^2\frac{1}{\rho}+\frac{1}{4\rho^2}\ln \rho+O\Big(\frac{1}{\rho^2}\Big)
\end{equation}
as $\rho\to \infty$.

The related problem
\begin{equation}\label{kw}
\left\{
\begin{aligned}
-\Delta u&=\lambda \exp({u^p})&&\quad\mbox{ in }B_1\setminus\{0\},\\
u&=0&&\quad\mbox{ on }\partial B_1,
\end{aligned}
\right.
\end{equation}
was recently studied in \cite{KW2018}. It is proved in \cite{KW2018} that \eqref{kw} has a singular solution $(\lambda^*, u^*)$ that satisfies
$$
u^*\Big(\frac{x}{\sqrt{\lambda^*}} \Big)=\left(2\ln \frac{1}{|x|}-\big(1-\frac{1}{p}\big)\ln\ln\frac{1}{|x|} \right)^{1/p}+o\Big(\ln^{-1+\frac{1}{p}}\frac{1}{|x|}\Big)\mbox{ as }|x|\to 0.
$$
Also  the Morse index of $u^*$ is infinite (resp finite) provided $3\leq n\leq 9$ (resp. $n\geq 11$).

We would like to point out that in \cite{M2018} a positive radial singular solution $U$ of
\begin{equation}\label{UM}
\left\{
\begin{aligned}
-\Delta U&= \exp({e^U})&&\quad\mbox{ in }B_R\setminus\{0\},\  R>0,\\
U&=0&&\qquad\mbox{ on }\partial B_R,
\end{aligned}
\right.
\end{equation}
is constructed. Such a singular solution $U$ has the property that
\begin{equation}\label{F}
U(r)=F^{-1}\Big(\frac{r^2}{2(n-2)}(1+o(1))\Big)\quad\mbox{ as }r\to 0,
\end{equation}
where
\begin{equation}\label{FF}
F(t)=\int_t^\infty\exp(-e^s)ds.
\end{equation}
We are able to show that the solution $U(r)$ of \eqref{UM} coincides with $u^*(\sqrt{\lambda^*}r)$ in a neighbourhood of the origin. Thus, we may further investigate the bifurcation problem
\begin{equation}\label{bif}
\left\{
\begin{aligned}
-\Delta u&=\lambda \exp({e^u})&& \quad\mbox{ in }B_1,\\
 u&>0 &&\quad\mbox{ in }B_1,\\
u&=0&&\quad\mbox{ on }\partial B_1.
\end{aligned}
\right.
\end{equation}
By the classical result of Gidas, Ni and Nirenberg \cite{GNN1979} all solutions of \eqref{bif} are radially symmetric. Furthermore (see \cite{M2018}) the solution set of \eqref{bif} can be described as $\{(\lambda(\rho), u(\rho))\}$ where $\rho=\|u(\rho)\|_{L^\infty(B_1)}$ and $\lambda(0)=0$. Hence, the solution set $(\lambda,u)$ is a curve emanating from $(\lambda,u)=(0,0)$. Using \cite[Theorem 1.1, Corollary 1.2, Corollary 1.3]{M2018} we have:
\begin{theorem}\label{th2}
Let $u^*$ be the solution obtained in Theorem \ref{th1}.
\begin{enumerate}
\item[(i)] If $n\geq 11$, then the Morse index of $u^*$ is finite;
\item[(ii)] If $3\leq n\leq 9$, then the Morse index of $u^*$ is infinite. Furthermore:
\begin{enumerate}
\item[(ii1)] The curve $(\lambda(\rho), u(\rho))$ has infinitely many turning points around $\lambda^*$. In particular problem \eqref{bif} has infinitely many solutions for $\lambda=\lambda^*$;

\item[(ii2)] The number of intersection points between $u(\rho)$ and the singular solution $u^*$ tends to infinity as $\rho\to \infty$.
\end{enumerate}
\end{enumerate}
\end{theorem}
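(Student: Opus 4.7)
The strategy is to identify, up to rescaling, the singular solution $u^*$ from Theorem \ref{th1} with the singular solution $U$ of \eqref{UM} constructed by Miyamoto in \cite{M2018}, and then invoke his results on Morse index and bifurcation structure.

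First, setting $v(y):=u^*(y/\sqrt{\lambda^*})$, the equation \eqref{main} gives $-\Delta v=\exp(e^v)$ in a punctured ball with zero boundary data, which is precisely the form of \eqref{UM}. The key analytic step is then to verify that $v$ exhibits the asymptotic behaviour \eqref{F}, i.e., $F(v(y))=\tfrac{|y|^2}{2(n-2)}(1+o(1))$ as $|y|\to 0$, with $F$ given by \eqref{FF}. An integration by parts yields $F(t)=e^{-e^t-t}(1+O(e^{-t}))$ as $t\to\infty$. Substituting the expansion \eqref{asympt1} and writing $\rho=\ln(1/|x|)$, the correction term $\tfrac{1}{2\rho}\ln\tfrac{n-2}{\rho}$ produces after exponentiation a factor $\rho/(n-2)$ in $e^{-e^v}$, which combines with $e^{-v}\sim 1/(2\rho)$ to give exactly $1/(2(n-2))$; the remaining terms in \eqref{asympt1} contribute only $1+o(1)$.

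Once this matching is in place, uniqueness for the radial ODE with prescribed singular behaviour at the origin forces $v\equiv U$ on a neighbourhood of $0$, and hence on the common interval of existence of both radial solutions; in particular the boundary radii coincide. Thus $u^*$ is, up to the scaling $x\mapsto \sqrt{\lambda^*}\,x$, the singular solution studied in \cite{M2018}. Statements (i) and (ii) on the Morse index then follow from \cite[Theorem 1.1]{M2018}, while (ii1) and (ii2) are direct transcriptions of \cite[Corollary 1.2, Corollary 1.3]{M2018}. The Morse index is preserved under the scaling since the linearized operator and its test functions transform covariantly, and the bifurcation parameter $\lambda$ in \eqref{bif} corresponds to varying the domain radius in \eqref{UM} via the same scaling.

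The principal obstacle lies in the asymptotic matching step: one must retain enough terms in both the expansion of $v$ and the large-$t$ expansion of $F$ so that the logarithmic factors cancel to produce the precise constant $1/(2(n-2))$. This is exactly why the refined expansion \eqref{asympt1} is needed; the leading-order estimate $v\sim \ln(2\rho)$ alone would merely yield $F(v)\sim |x|^2/(2\rho)$, which is off by a logarithmic factor and would prevent the identification with Miyamoto's $U$.
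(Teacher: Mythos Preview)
Your overall strategy matches the paper's: set $v^*(y)=u^*(y/\sqrt{\lambda^*})$, show that $v^*$ coincides with Miyamoto's singular solution $U$ of \eqref{UM} near the origin, and then read off (i), (ii), (ii1), (ii2) from \cite[Theorem~1.1, Corollaries~1.2--1.3]{M2018}. The asymptotic computation you sketch for $F(v^*(y))$ is also essentially the one in the paper, and it does yield
\[
x^*(t):=2(n-2)e^{2t}F\big(w^*(t)\big)-1\longrightarrow 0\quad\text{as }t\to\infty,
\]
where $w^*(t)=v^*(e^{-t})$.

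The genuine gap is in the sentence ``uniqueness for the radial ODE with prescribed singular behaviour at the origin forces $v\equiv U$''. No such uniqueness statement is available a priori: the singular point is not a regular initial point, so ODE uniqueness does not apply directly, and Miyamoto's construction in \cite[Lemma~3.1]{M2018} characterises $U$ as the \emph{unique fixed point of a contraction on pairs} $(x,y)$ with $y=x'$ lying in a small ball $B_\varepsilon\subset C([T,\infty),\R^2)$. Verifying only $x^*(t)\to 0$ does not place $(x^*,y^*)$ in $B_\varepsilon$; you must also prove
\[
y^*(t)=\frac{dx^*}{dt}(t)=4(n-2)e^{2t}F(w^*(t))-2(n-2)\,\frac{e^{2t}\,w^*_t(t)}{\exp\!\big(e^{w^*(t)}\big)}\longrightarrow 0.
\]
This is exactly what the paper does, and it requires the additional control $w^*_t(t)=1/t+o(1/t)$, i.e.\ the gradient expansion \eqref{gradi} (equivalently Lemma~\ref{ett}), which you never invoke. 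Without that second limit, the contraction-mapping uniqueness in \cite{M2018} cannot be applied and the identification $v^*\equiv U$ is unjustified. Once you add this derivative estimate the argument closes and becomes identical to the paper's.
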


We also address in this article the similar problem
with iterared exponential, that reads for $m\geq 2$ and $G_0(y)=y$ and $G_m(y)=\exp(G_{m-1}(y))$

\begin{equation}\label{mainiterated}
\left\{
\begin{aligned}
-\Delta u&=\lambda \exp({G_m(u)})&&\quad\mbox{ in }B_1\setminus\{0\},\\
u&=0&&\quad\mbox{ on }\partial B_1.
\end{aligned}
\right.
\end{equation}

For problem \eqref{mainiterated} we prove

\begin{theorem}\label{th3}
Let $m\geq 2$ and let $H_m(y)=\ln(H_{m-1}(y))$ be the iterated logarithm ($H_0(y)=y$).
There exists a unique $\lambda^*>0$ such that \eqref{mainiterated} has a singular solution $u^*$ such that, as $|x|\to 0$ we have,
for $\rho=\ln \frac{1}{|x|}$,
\begin{equation}\label{asymptm}\begin{split}
u^*\Big(\frac{x}{\sqrt{\lambda^*}} \Big)=H_m(2\rho)+H'_m(2\rho)\left(\ln(2(n-2))-\sum_{j=1}^m H_j(2\rho) \right)+\\
 -\frac{1}{4\rho}H'_m(2\rho)(\ln \rho)^2+O\Big(\rho^2\Big),
\end{split}\end{equation}
and
\begin{equation}\label{gradim}
\frac{1}{\sqrt{\lambda^*}}\big|\nabla u^*\big|\Big(\frac{x}{\sqrt{\lambda^*}}\Big)=2\frac{H'_m(2\ln(\frac{1}{|x|}))}{|x|}+O\Big(\frac{1}{|x| \ln^2(\frac{1}{|x|})}\Big) .
\end{equation}
\end{theorem}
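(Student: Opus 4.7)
I would proceed in close analogy with the proof of Theorem~\ref{th1}, reducing the problem to a singular ODE by radial symmetry and an Emden--Fowler substitution, constructing the asymptotic expansion~\eqref{asymptm} iteratively, promoting it to a genuine singular solution by a contraction argument, and finally selecting $\lambda^*$ from the boundary condition. Concretely, after rescaling $x \mapsto x/\sqrt{\lambda^*}$ (which absorbs $\lambda$) and setting $t = \ln(1/r)$, $v(t) = u^*(r)$, the radial equation becomes
\[
v''(t) - (n-2) v'(t) + e^{-2t} \exp\!\bigl(G_m(v(t))\bigr) = 0, \qquad t > 0,
\]
with $v(+\infty) = +\infty$ and $v(\tfrac{1}{2}\ln \lambda^*) = 0$.

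The leading behaviour $v(t) \sim H_m(2t)$ is forced by the formal balance $\exp(G_m(v)) \sim e^{2t}$. I would refine by inserting $v(t) = H_m(2t) + H_m'(2t)\phi(t) + \cdots$ and using the identities
\[
G_m'\bigl(H_m(y)\bigr) = \frac{1}{H_m'(y)}, \qquad H_m'(y) = \frac{1}{y\, H_1(y) \cdots H_{m-1}(y)}, \qquad \ln H_m'(y) = -\sum_{j=1}^m H_j(y).
\]
The first identity yields $G_m(v) = 2t + \phi(t) + O\bigl(H_m''(2t)\phi^2/H_m'(2t)\bigr)$, so $e^{-2t}\exp(G_m(v)) = e^{\phi(t)}(1+o(1))$; matching this against $(n-2) v' \approx 2(n-2) H_m'(2t)$ forces $e^{\phi(t)} \approx 2(n-2) H_m'(2t)$, i.e.\ $\phi(t) \approx \ln(2(n-2)) - \sum_{j=1}^m H_j(2t)$, reproducing the second term in~\eqref{asymptm}. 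A further iteration that now incorporates the $v''$ term and the next Taylor term of $\exp(G_m(v))$ produces the $-\tfrac{1}{4\rho} H_m'(2\rho)(\ln \rho)^2$ correction, after which the remaining contributions fall within the stated error.

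With an approximate solution $V_{\mathrm{app}}$ at hand, I would construct an exact singular solution $v_\infty = V_{\mathrm{app}} + R$ on some half-line $[T_0, \infty)$ by contraction in a weighted Banach space tuned to the predicted remainder size. The linearised operator is uniformly invertible for $T_0$ large because the coefficient of $R$ arising from $\exp(G_m(v))$ has size comparable to $1/H_m'(2t)$, which balances the smallness of $R$ against the $e^{-2t}$ prefactor. Continuing $v_\infty$ backward as a smooth ODE, the map $t_* \mapsto v_\infty(t_*)$ is continuous and strictly decreasing from $+\infty$ to $-\infty$, so there is a unique $t_*>0$ with $v_\infty(t_*)=0$, giving $\lambda^* = e^{2t_*}$ and the claimed $u^*$. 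The gradient expansion~\eqref{gradim} then follows by differentiating the asymptotic for $v$, using $|\nabla u^*(r)| = |v'(t)|/r$.

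The main obstacle is the asymptotic step: because $H_m'(2t)$ is extremely small while $G_m'$ compensates by blowing up, each order of refinement hinges on a delicate cancellation among derivatives of the nested logarithms, and controlling the error at the order stated in~\eqref{asymptm} requires sharp uniform Taylor estimates for $H_m$, $H_m'$ and $H_m''$, most naturally proved by induction on $m$. Once these estimates are in place, the contraction-mapping construction and the monotonicity argument selecting $\lambda^*$ are essentially identical to those used for Theorem~\ref{th1}.
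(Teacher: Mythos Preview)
Your proposal is correct and follows essentially the same route as the paper: the Emden--Fowler reduction, a contraction-mapping construction of the remainder in a weighted space of size $O(t^{-2})$, and backward continuation of the resulting solution to locate the unique zero (hence $\lambda^*$) are all the same. The one difference worth flagging is the form of the approximate profile. You build it additively as $v(t)=H_m(2t)+H_m'(2t)\phi(t)+\cdots$ and then Taylor-expand $G_m$ about $H_m(2t)$; the paper instead writes
\[
w(t)=H_m\bigl(2t+\varphi(t)\bigr)+\eta(t),\qquad \varphi(t)=\ln\bigl(2(n-2)H_m'(2t)\bigr),
\]
placing the correction inside the argument of $H_m$. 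Because $G_m(H_m(2t+\varphi))=2t+\varphi$ exactly, the nonlinear term $\exp(-2t+G_m(w))$ collapses to $e^{\varphi}$ times a factor involving only $\eta$, and the linearised coefficient of $\eta$ comes out as $2(n-2)+O((\ln t)/t)$ without any expansion of $G_m$; this lets the contraction step quote Lemma~\ref{contraction} verbatim. Your additive ansatz is Taylor-equivalent and lands in the same contraction space, but carries a bit more algebra (and your remark that the linearised coefficient is ``comparable to $1/H_m'(2t)$'' is only true before the $\phi$-correction is inserted---after it, the coefficient is $2(n-2)$, which is what makes the fixed-point argument go through).
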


The next sections contain the proofs of the main results. Throughout this paper for any functions $f(t)$, $g(t)$ defined in a neighbourhood of infinity, we use the notation $f(t)=O(g(t))$ (resp. $f(t)=o(g(t))$ as $t\to \infty$ to indicate that $|\frac{f(t)}{g(t)}|$ is bounded (resp. tends to zero) as $t\to \infty$. A similar notation is used for $t\to 0$. Also, the symbols $C$, $c$ stand for generic positive constants whose values may be different on each occurence.

\section{Proof of Theorem \ref{th1}}

Let $u$ be a radial solution of \eqref{main}. Letting $v(x)=u\big(\frac{x}{\sqrt{\lambda}} \big)$ we find
\begin{equation}\label{v}
\left\{
\begin{aligned}
&v_{rr}+\frac{n-1}{r}v_r+\exp(e^v)=0\quad\mbox{ for all }0<r<\sqrt{\lambda},\\
&v(\sqrt{\lambda})=0.
\end{aligned}
\right.
\end{equation}
Letting $t=-\ln r$ and $w(t)=v(r)$ we find that $w\in C^2(-\ln \sqrt{\lambda}, \infty)$ satisfies
\begin{equation}\label{w}
\left\{
\begin{aligned}
&w_{tt}-(n-2) w_t+\exp(-2t+e^w)=0, \;w>0\quad\mbox{ for all }\, -\ln \sqrt{\lambda}<t<\infty,\\
&w(-\ln \sqrt{\lambda})=0.
\end{aligned}
\right.
\end{equation}
We next look for a solution $w(t)$ of \eqref{w} in the form
\begin{equation}\label{wsol}
w(t)=\ln\big(2t+\varphi(t)\big)+\eta(t)
\end{equation}
where
$$
\varphi(t)=\ln \frac{n-2}{t}+\ln\Big(1+\frac{\ln t}{2t}\Big).
$$
Let us observe that, as $t\to \infty$, we have
\begin{equation}\label{vfi}
\begin{aligned}
\varphi_t(t)&=-\frac{1}{t}+\frac{1-\ln t}{t(2t+\ln t)}=O\Big(\frac{1}{t}\Big),\\
\varphi_{tt}(t)&=\frac{(4t+1+\ln t)(2t+2\ln t-1)-2(t+1)(2t+\ln t)}{t^2(2t+\ln t)^2}=O\Big(\frac{1}{t^2}\Big).
\end{aligned}
\end{equation}
Letting $f=\ln \big(2t+\varphi(t)\big)$ we have
\begin{align}
f_t&=\frac{2+\varphi_t}{2t+\varphi}= O\Big(\frac{1}{t}\Big) \quad\mbox{ as }t\to \infty,\label{f1}\\
f_{tt}&=\frac{\varphi_{tt}}{2t+\varphi}-\Big(\frac{2+\varphi_t}{2t+\varphi}\Big)^2=O\Big(\frac{1}{t^2}\Big) \quad\mbox{ as }t\to \infty.\label{f2}
\end{align}

Also,
$$
\begin{aligned}
e^\varphi-(n-2)f_t&=\frac{n-2}{t}+\frac{(n-2)\ln t}{2t^2}-(n-2)f_t\\
&=\frac{n-2}{t}\cdot \frac{2t(\varphi+\ln t)-2t^2 \varphi_t+\varphi \ln t}{2t(2t+\varphi)}.
\end{aligned}
$$
Observe that
$$
\varphi(t)+\ln t=O(1)\quad\mbox{ as } t\to \infty,
$$
so by \eqref{vfi} and the above calculations we find
\begin{equation}\label{ff}
e^\varphi-(n-2)f_t=O\Big(\frac{1}{t^2}\Big) \quad\mbox{ as }t\to \infty.
\end{equation}
Using equation \eqref{wsol} we have
$$
\begin{aligned}
\exp(-2t+e^w)=&\exp\big[ e^{\eta}(2t+\varphi)-2t \big] \\
=&\exp\big[ (e^{\eta}-1)(2t+\varphi)+\varphi \big] \\
=&e^\varphi\left\{\exp\big[ (e^{\eta}-1)(2t+ \varphi)\big]-(e^{\eta}-1)(2t+\varphi)-1\right\}\\
&+e^\varphi (e^{\eta}-1)(2t+\varphi)+e^\varphi \\
=&e^\varphi\left\{\exp\big[ (e^{\eta}-1)(2t+\varphi)\big]-(e^{\eta}-1)(2t+\varphi)-1\right\} \\
&+e^\varphi (e^{\eta}-\eta-1)(2t+ \varphi)+e^\varphi \eta (2t+\varphi)+e^\varphi,
\end{aligned}
$$
so
\begin{equation}\label{e}
\exp(-2t+e^w)=F_1(t) \eta+F_2(t, \eta)+F_3(t, \eta)+2(n-2)\eta+e^\varphi,
\end{equation}
where
\begin{align}
F_1(t) \eta&=e^\varphi \eta(2t+\varphi)-2(n-2)\eta=\Big(\frac{(n-2)\ln t}{t}+e^\varphi \varphi\Big)\eta\label{ff1}\\
F_2(t, \eta)&=e^\varphi (e^{\eta}-\eta-1)(2t+\varphi)\label{ff2}\\
F_3(t, \eta)&=e^\varphi\left\{\exp\big[ (e^{\eta}-1)(2t+\varphi)\big]-(e^{\eta}-1)(2t+ \varphi)-1\right\}.\label{ff3}
\end{align}
Let also $F_0(t)=e^\varphi +f_{tt}-(n-2)f_t$.

Using the first equation of \eqref{w} together with \eqref{wsol}, \eqref{e}-\eqref{ff3} we deduce that $\eta$ satisfies
\begin{equation}\label{eta1}
\eta_{tt}-(n-2) \eta_t+2(n-2)\eta+F(t, \eta)=0, \quad\mbox{ for all }\, -\ln \sqrt{\lambda}<t<\infty,
\end{equation}
where
$$
F(t, \eta)=F_1(t) \eta+F_2(t, \eta)+F_3(t, \eta)+F_0(t).
$$
We shall show that equation \eqref{eta1} has a solution $\eta\in X_T$ where $T>0$ is a real number and
\begin{equation}\label{eqxt}
X_T=\{\eta\in C(T, \infty), \; \eta=O(1/t^2)\mbox{ as }t\to \infty\},
\end{equation}
equipped with the norm $\|\eta\|_{X_T}=\sup_{t>T} t^2|\eta(t)|$.
As in \cite{KW2018}, we discuss in the following the case $3\leq n\leq 9$, the case $n\geq 10$ being similar. We transform \eqref{eta1} into the fixed point equation
\begin{equation}\label{fixedp}
\eta=\Psi[\eta],
\end{equation}
where the integral operator $\Psi[\eta]$ is given by
\begin{equation}\label{integraloperator}
\Psi[\eta](t)=-\frac{1}{\mu} e^{\frac{(n-2)t}{2}} \int_t^\infty e^{-\frac{(n-2)s}{2}}\sin(\mu(s-t))F(s,\eta) ds,
\end{equation}
and $\mu=\sqrt{(n-2)(10-n)}\geq 1$. The existence of a solution to \eqref{fixedp} will be derived by means of the contraction principle; to this end, for $M>0$ set
\begin{equation}\label{eqst}
\Sigma_M=\{\eta\in X_T:\|\eta\|_{X_T}\leq M\}.
\end{equation}

\begin{lemma}\label{contraction}
There exist $M,T>0$ such that $\Psi(\Sigma_M)\subset \Sigma_M$ and $\Psi:\Sigma_M\to \Sigma_M$ is a contraction.
\end{lemma}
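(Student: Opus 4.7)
The plan is a standard Banach fixed-point argument on the closed ball $\Sigma_M$ inside the Banach space $X_T$. The whole matter reduces to two pointwise estimates on $F$: for every $\eta, \eta_1, \eta_2 \in \Sigma_M$ and every $s > T$,
\begin{equation*}
|F(s,\eta)| \leq \frac{C}{s^2}, \qquad |F(s,\eta_1) - F(s,\eta_2)| \leq \frac{C}{s^3}\|\eta_1 - \eta_2\|_{X_T}.
\end{equation*}
These are then transferred through $\Psi$ by the elementary inequality
\begin{equation*}
\frac{1}{\mu}\,e^{(n-2)t/2} \int_t^\infty e^{-(n-2)s/2}\,\frac{ds}{s^\alpha} \leq \frac{2}{\mu(n-2)\,t^\alpha},
\end{equation*}
obtained by pulling $1/s^\alpha \leq 1/t^\alpha$ outside the integral (here $\alpha \in \{2,3\}$). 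With $\alpha = 2$ this yields $\|\Psi[\eta]\|_{X_T} \leq C_0$ for a constant $C_0$ independent of $T$; fixing $M := C_0$ gives the self-map property. With $\alpha = 3$ in the Lipschitz version it gives $\|\Psi[\eta_1] - \Psi[\eta_2]\|_{X_T} \leq (C_1/T)\,\|\eta_1 - \eta_2\|_{X_T}$, so enlarging $T$ makes $\Psi$ a strict contraction on $\Sigma_M$.

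To justify the first estimate on $F$, I would decompose $F = F_0 + F_1\eta + F_2 + F_3$ and bound each piece using the asymptotics already recorded in \eqref{vfi}--\eqref{ff}. The deterministic term is $F_0 = \bigl[e^\varphi - (n-2)f_t\bigr] + f_{tt} = O(1/t^2)$. From \eqref{ff1} combined with the expansion $e^\varphi \varphi = -(n-2)\ln t/t + O(1/t)$ one gets $F_1(t) = O(1/t)$, hence $|F_1(t)\eta| = O(1/t^3)$ on $\Sigma_M$. For $F_2$, the Taylor inequality $|e^\eta - \eta - 1| \leq C\eta^2$ together with $e^\varphi(2t+\varphi) = 2(n-2) + o(1)$ gives $|F_2(t,\eta)| = O(1/t^4)$. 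The delicate piece is $F_3$: its argument $z := (e^\eta - 1)(2t + \varphi)$ behaves like $2t\eta = O(1/t)$ on $\Sigma_M$, so $\exp(z) - z - 1 = O(z^2) = O(1/t^2)$, and multiplying by $e^\varphi = O(1/t)$ yields $|F_3| = O(1/t^3)$. Summing the four contributions produces $|F(s,\eta)| = O(1/s^2)$.

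The Lipschitz estimate is obtained by the same philosophy applied to differences: $F_0$ cancels out; $F_1(s)(\eta_1 - \eta_2)$ is bounded by $O(1/s^3)\|\eta_1 - \eta_2\|_{X_T}$ directly; for $F_2$ one uses the mean value inequality $|(e^{\eta_1} - \eta_1 - 1) - (e^{\eta_2} - \eta_2 - 1)| \leq C\max(|\eta_1|,|\eta_2|)\,|\eta_1 - \eta_2|$; and for $F_3$ one writes $F_3 = e^\varphi\,\Phi(z)$ with $\Phi(z) = e^z - z - 1$, uses $\Phi'(z) = O(z) = O(1/s)$, and estimates $|z_1 - z_2| \leq Cs\,|\eta_1 - \eta_2|$. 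The main obstacle I anticipate is precisely this $F_3$ contribution: it involves a double exponential whose inner argument carries a factor linear in $t$, and only the $O(1/s^2)$ decay built into the norm of $X_T$ keeps $(e^\eta - 1)(2s + \varphi)$ small enough to Taylor-expand cleanly. This is exactly what forces both the weight $t^2$ in the definition of $X_T$ and the need to take $T$ sufficiently large.
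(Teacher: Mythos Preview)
Your proposal is correct and follows the same route as the paper's proof: the same decomposition $F=F_0+F_1\eta+F_2+F_3$, the same Taylor bounds on each piece, and the same transfer through $\Psi$ via the elementary integral inequality with $1/s^\alpha$ pulled outside. The one place to tighten is the choice of $M$: as written, your $C_0$ depends on $M$ through the $F_2,F_3$ contributions, so ``set $M:=C_0$'' is mildly circular; the paper resolves this by fixing $M=2A$ (twice the constant in $|F_0|\le A/t^2$, independent of $M$), absorbing the faster-decaying terms by enlarging $T$, and then invoking the dimensional inequality $(n-2)\mu\ge 2$ to close the self-map via $\|\Psi[\eta]\|_{X_T}\le \tfrac{2M}{\mu(n-2)}\le M$.
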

\begin{proof}
Using \eqref{f2} and \eqref{ff} we have
$$
|F_0(t)|\leq \frac{A}{t^2}\quad\mbox{ for  $t>0$ large, }
$$
where $A>0$. Let now $\eta\in \Sigma_M$. Then, for  $t>0$ large, we estimate
$$
\begin{aligned}
|F_1(t)\eta|\leq & \frac{C\ln t}{t}|\eta(t)|\leq \frac{CM\ln t}{t^3},\\
|F_2(t, \eta)|\leq & C(e^\eta-\eta-1)\leq C\eta^2(t)\leq \frac{CM^2}{t^4}, \\
|F_3(t, \eta)|\leq & \frac{C}{t}(e^\eta-1)^2(2t+\varphi)^2\leq \frac{C}{t}\eta^2(t) t^2 \leq \frac{CM^2}{t^3}.
\end{aligned}
$$
Thus, by taking $M=2A$ and $T>0$ large enough we have
$$
|F(t, \eta)|\leq \frac{M}{t^2}\quad\mbox{ for all }t>T.
$$
Using this fact we have
$$
\begin{aligned}
\Big|\Psi[\eta](t) \Big|& \leq \frac{M}{\mu}
e^{\frac{(n-2)t}{2}} \int_t^\infty \frac{e^{-\frac{(n-2)s}{2}}}{s^2} ds\\
&\leq \frac{M}{\mu}
\frac{e^{-\frac{(n-2)t}{2}}}{t^2}  \int_t^\infty e^{-\frac{(n-2)s}{2}}  ds\\
&=\frac{M}{\mu}\cdot \frac{2}{n-2}\leq M,
\end{aligned}
$$
since $(n-2)\mu \geq 2$. This shows that $\Psi(\Sigma_M)\subset \Sigma_M$.

To prove that $\Psi:\Sigma_M\to \Sigma_M$ is a contraction, let $\eta_1, \eta_2\in \Sigma_M$. Then
\begin{equation}\label{cont1}
\big|\Psi[\eta_1](t)-\Psi[\eta_2](t)\big|\leq \frac{1}{\mu} \sum_{k=1}^3
e^{\frac{(n-2)t}{2}} \int_t^\infty e^{-\frac{(n-2)s}{2}}\big|F_k(s,\eta_1)-F_k(s, \eta_2)\big| ds.
\end{equation}
From \eqref{ff3} we have
$$
\big|F_1(s)(\eta_1(s)-\eta_2(s))\big|\leq
 \frac{C\ln s}{s}|\eta_1(s)-\eta_2(s)|\leq \frac{C\ln s}{s^3}\|\eta_1-\eta_2\|_{X_T}.
$$
Using the Mean Value Theorem and \eqref{ff2} we estimate
$$
\begin{aligned}
\big|F_2(s,\eta_1(s))-F_2(s, \eta_2(s))\big|& \leq C\big|(e^{\eta_1(s)}-\eta_1(s))-(e^{\eta_2(s)}-\eta_2(s))\big|\\
&\leq C\big| e^{\eta(s)}-1\big| |\eta_1(s)-\eta_2(s)|,
\end{aligned}
$$
for some $\eta(s)$ between $\eta_1(s)$ and $\eta_2(s)$. Thus,
$$
\big|F_2(s,\eta_1(s))-F_2(s, \eta_2(s))\big|\leq C|\eta(s)|  |\eta_1(s)-\eta_2(s)|
\leq \frac{C\ln s}{s^3}\|\eta_1-\eta_2\|_{X_T}.
$$
Also, leting $\theta_j(s)=(e^{\eta_j(s)}-1)(2s+\eta_j(s))$, by \eqref{ff1} and the Mean Value Theorem we estimate
$$
\begin{aligned}
\big|F_3(s,\eta_1)-F_3(s, \eta_2)\big|& = e^{\varphi(s)}\big|(e^{\theta_1(s)}-\theta_1(s))-(e^{\theta_2(s)}-\theta_2(s))\big|\\
&\leq \frac{C}{s}\big| e^{\theta(s)}-1\big| |\theta_1(s)-\theta_2(s)|\\
& \leq \frac{C}{s}\big|\theta(s)\big| |\theta_1(s)-\theta_2(s)|,
\end{aligned}
$$
for some $\theta(s)$ between $\theta_1(s)$ and $\theta_2(s)$. By the Mean Value Theorem we further estimate
$$
\begin{aligned}
\big|\theta_1(s)-\theta_2(s)\big|&=\big|(\eta_1(s)-\eta_2(s))(e^{\eta_1(s)}-1)+(\eta_2(s)+2s)(e^{\eta_1(s)}-e^{\eta_2(s)}) \big|\\
& =\big| e^{\eta_1(s)}-1+(\eta_2(s)+2s)(e^{\eta(s)}\big| \big|\eta_1(s)-\eta_2(s)|\\
&\leq C \big|\eta_1(s)-\eta_2(s)|,
\end{aligned}
$$
where $\eta(s)$ lies between $\eta_1(s)$ and $\eta_2(s)$. Hence,
$$
\big|F_3(s,\eta_1)-F_3(s, \eta_2)\big|
\leq\frac{C}{s}|\eta_1(s)-\eta_2(s)|\leq \frac{C}{s^3}\|\eta_1-\eta_2\|_{X_T}.
$$

Now, using the above estimates in \eqref{cont1} we deduce
$$
\big|\Psi[\eta_1](t)-\Psi[\eta_2](t)\big|\leq\frac{C}{t^2}\|\eta_1-\eta_2\|_{X_T}.
$$
By taking now $T>0$ large enough it follows that $\Psi:\Sigma_M\to \Sigma_M$ is a contraction.
\end{proof}

We are now in a position to prove the result in Theorem \ref{th1}. First, there exists $\eta\in \Sigma_M$ a a solution of \eqref{fixedp}, that is, $\eta\in C^2(T, \infty)$ satisfies
\begin{equation}\label{eta01}
\left\{
\begin{aligned}
&\eta_{tt}-(n-2) \eta_t+2(n-2)\eta+F(t, \eta)=0, \quad\mbox{ for all }\, t>T,\\
&\eta(t)=O\big(1/t^2\big)\quad\mbox{ as }t\to \infty.
\end{aligned}
\right.
\end{equation}
Thus, the function $w$ given by \eqref{wsol} is positive in a neighbourhood of infinity and satisfies
\begin{equation}\label{wsol1}
w_{tt}-(n-2) w_t+\exp(-2t+e^w)=0 \quad\mbox{ for all }\,T<t<\infty.
\end{equation}
We claim that $w$ vanish at some point $T_0>T$. Otherwise, $w>0$ in $(T, \infty)$ and by the continuation principle $w$ satisfies \eqref{wsol1} on the whole real line. We claim that this implies that $w$ is monotone increasing. Indeed, assuming the contrary, there would exist a minimum point $t_0\in \R$ at which $w_t(t_0)=0$ and $w_{tt}(t_0)\geq 0$ which contradicts \eqref{wsol1}. Hence, $w$ is monotone increasing and there exists $L:=\lim_{t\to -\infty}w(t)\in [0, \infty)$.

Multiply in \eqref{wsol1} by $e^{-(n-2)t}$ and using the fact that $w$ in increasing, we find
$$
\frac{d}{dt}\Big(e^{-(n-2)t}w_t\Big)=-\exp(-nt+e^w)\leq -\exp(-nt+e^L).
$$
Integrating in the above equality over the interval $[t,t_0]$, $-\infty<t<t_0$, we find
$$
e^{-(n-2)t_0}w_t(t_0)-e^{-(n-2)t}w_t(t)\leq C\big(e^{-nt_0}-e^{-nt}\big)\quad\mbox{ for all }-\infty<t<t_0,
$$
where $C=\frac{1}{n}\exp(e^L)>0$. This implies further that $\lim_{t\to -\infty}w_t(t)=\infty$ which contradicts the fact that $\lim_{t\to-\infty}w(t)$ is finite.
This shows that $w$ vanishes at some point $T^*\in \R$ and $w\in C^2(T^*, \infty)$ satisfies
\begin{equation}\label{eta2}
\left\{
\begin{aligned}
&w_{tt}-(n-2) w+\exp(-2t+e^w)=0,\; w>0 \quad\mbox{ for all }\, t>T,\\
&w(t)=\ln(2t+\varphi(t))+O\Big(\frac{1}{t^2}\Big)\quad\mbox{ as }t\to \infty,\\
&w(T^*)=0.
\end{aligned}
\right.
\end{equation}
Letting $\lambda^*=e^{-2T^*}$ and $u^*(x)=v^*\big(\sqrt{\lambda^*}x \big)$ (where $v$ is the solution of \eqref{v} with $\lambda=\lambda^*$) we obtain that $u^*$ is a solution of \eqref{main} with $\lambda=\lambda^*$  which satisfies \eqref{asympt1}. Concerning the proof of the  asymptotic behaviour in \eqref{asympt2} we have
\begin{equation}\label{asympt3}
w(t)=\ln(2t)+\ln\Big(1+\frac{\varphi(t)}{2t}\Big)+O\Big(\frac{1}{t^2}\Big)=\ln(2t)+\frac{\varphi(t)}{2t}-\frac{\varphi^2(t)}{8t^2}+O\Big(\frac{1}{t^2}\Big)
\end{equation}
as $t\to \infty$. Since
$$
\begin{aligned}
\varphi(t)&=\ln\frac{n-2}{t}+\ln\Big(1+\frac{\ln t}{2t}\Big)\\
&=\ln\frac{n-2}{t}+\frac{\ln t}{2t}-\frac{\ln^2 t}{8t^2}+O\Big(\frac{1}{t^2}\Big)\quad\mbox{ as }t\to \infty,
\end{aligned}
$$
we have
\begin{align}
\frac{\varphi(t)}{2t}&= \frac{1}{2t}\ln \frac{n-2}{t}+\frac{1}{4t^2}\ln t+O\Big(\frac{1}{t^2}\Big)\quad\mbox{ as }t\to \infty,\label{asympt4}\\
\frac{\varphi^2(t)}{8t^2}&=\frac{1}{8t^2}\ln^2 \frac{1}{t}+O\Big(\frac{1}{t^2}\Big)\quad\mbox{ as }t\to \infty.
 \label{asympt5}
\end{align}
Using \eqref{asympt4}-\eqref{asympt5} in \eqref{asympt3} we obtain
$$
w(t)=\frac{1}{2t}\ln\frac{n-2}{t}-\frac{1}{8t^2}\ln^2\frac{1}{t}+\frac{1}{4t^2}\ln t+O\Big(\frac{1}{t^2}\Big)
\quad\mbox{ as } t \to \infty,
$$
which proves \eqref{asympt2}.

We next focus on the expansion of the gradient $\nabla u^*$ around the origin.

\begin{lemma}\label{ett} The solution $\eta$ of \eqref{eta01} satisfies
\begin{equation}\label{etat}
\eta_t(t)=O\Big(\frac{1}{t^2}\Big)\quad\mbox{ as }t\to \infty.
\end{equation}
\end{lemma}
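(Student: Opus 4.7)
The plan is to read off $\eta_t$ directly from the fixed-point formula $\eta = \Psi[\eta]$ in \eqref{integraloperator} and recycle the pointwise bound $|F(s,\eta(s))|\leq M/s^2$ that was already established inside the proof of Lemma \ref{contraction}. Since $\eta$ is a fixed point of $\Psi$, it inherits the regularity and decay structure of the integral operator, and differentiating under the integral is the cleanest route.

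Concretely, I would start from
$$\eta(t) = -\frac{1}{\mu}\,e^{\frac{(n-2)t}{2}} \int_t^\infty e^{-\frac{(n-2)s}{2}}\sin\bigl(\mu(s-t)\bigr) F(s,\eta(s))\,ds$$
and differentiate with respect to $t$. The boundary term produced by the Leibniz rule at $s=t$ vanishes because $\sin(0)=0$, while the interior $t$-derivative gives $-\mu\cos(\mu(s-t))F(s,\eta)$. Combining this with the derivative of the prefactor $e^{(n-2)t/2}$ and using the representation of $\eta(t)$ itself, one obtains the clean identity
$$\eta_t(t) = \frac{n-2}{2}\,\eta(t) + e^{\frac{(n-2)t}{2}} \int_t^\infty e^{-\frac{(n-2)s}{2}}\cos\bigl(\mu(s-t)\bigr) F(s,\eta(s))\,ds.$$

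It then remains to estimate both terms on the right. Since $\eta\in\Sigma_M$ the first term is manifestly $O(1/t^2)$. For the integral term I would bound $|\cos(\mu(s-t))|\leq 1$, pull the factor $1/s^2 \leq 1/t^2$ outside, and invoke exactly the computation already performed in Lemma \ref{contraction}, namely
$$e^{\frac{(n-2)t}{2}} \int_t^\infty e^{-\frac{(n-2)s}{2}}\,ds = \frac{2}{n-2}.$$
This yields $|\eta_t(t)| \leq C/t^2$ for $t$ sufficiently large, which is \eqref{etat}.

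I do not foresee any genuine obstacle in this lemma: the essential point is that the $\sin$ kernel kills the boundary contribution when one differentiates under the integral, so no term of order larger than $1/t^2$ can appear. The only minor issue is the formal justification of differentiation under the integral sign, which is immediate from the exponential weight $e^{-(n-2)s/2}$ and the continuity of $s\mapsto F(s,\eta(s))$ on $(T,\infty)$.
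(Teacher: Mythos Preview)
Your argument is correct. Differentiating the fixed-point representation \eqref{integraloperator} in $t$ is perfectly legitimate here: the exponential weight guarantees absolute convergence, the boundary term at $s=t$ indeed vanishes because of the factor $\sin(0)$, and your identity
\[
\eta_t(t)=\frac{n-2}{2}\,\eta(t)+e^{\frac{(n-2)t}{2}}\int_t^\infty e^{-\frac{(n-2)s}{2}}\cos\bigl(\mu(s-t)\bigr)F(s,\eta(s))\,ds
\]
checks out. The bound $|F(s,\eta(s))|\le M/s^2$ from Lemma~\ref{contraction} then gives \eqref{etat} exactly as you describe.

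The paper takes a slightly different route. Rather than differentiating \eqref{integraloperator}, it goes back to the ODE \eqref{eta01}, groups the terms as
\[
\eta_{tt}-(n-2)\eta_t = -2(n-2)\eta - F(t,\eta)=:g(t,\eta)=O(t^{-2}),
\]
and then reads off $\eta_t$ from the first-order integrating-factor formula
\[
\eta_t(t)=-\int_t^\infty e^{(n-2)(t-s)}g(s,\eta(s))\,ds.
\]
This is shorter to write but hides one step: one must rule out an extra homogeneous contribution $Ce^{(n-2)t}$, which is done implicitly via the a priori bound $\eta=O(t^{-2})$. Your approach sidesteps that issue entirely, since the absence of a growing mode is baked into the integral representation you started from. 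Conversely, the paper's argument is insensitive to the particular form of the kernel and carries over verbatim to the case $n\ge 10$, whereas your computation would need the analogous (real-root) kernel there.
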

\begin{proof} From the proof of Lemma \ref{contraction} we infer that
$$
\eta_{tt}-(n-2)\eta_t=g(t,\eta)=O\Big(\frac{1}{t^2}\Big)\quad\mbox{ as }t\to \infty.
$$
\noindent Then $\eta_t$ is solution to the integral equation
$$
\eta_t(t)=-\int_t^{+\infty} e^{(n-2)(t-s)}g(s,\eta(s))ds.
$$
\noindent The result follows promptly.
\end{proof}

Now, from \eqref{wsol}, \eqref{ff}  and \eqref{etat} we have
\begin{equation}\label{wt}
w_t=f_t+\eta_t=\frac{1}{n-2}e^{\varphi}+O\Big( \frac{1}{t^2}\Big)=
\frac{1}{t}+\frac{\ln t}{2t^2}+O\Big( \frac{1}{t^2}\Big)  \quad\mbox{ as }t\to \infty.
\end{equation}

Recall that the singular solution $u^*$ is given by $u^*\big(\frac{r}{\sqrt{\lambda^*}}\big)=v(r)=w(t)$ where $v$ and $w$ are solutions of \eqref{v} and \eqref{w} (with $\lambda=\lambda^*$).  Since  $|w_t|=|v_r| | \frac{dr}{dt}|=r|v_r|$, from \eqref{wt} we find

$$
\frac{1}{\sqrt{\lambda^*}}\big|\nabla u^*\big|\Big(\frac{x}{\sqrt{\lambda^*}}\Big)=
\frac{1}{|x|\ln\frac{1}{|x|}}+\frac{\ln \Big(\ln\frac{1}{|x|}\Big)}{2|x| \ln^2 \frac{1}{|x|}}+O\Big( \frac{1}{|x|\ln^2\frac{1}{|x|}}\Big) \quad\mbox{ as }|x|\to 0.
$$

\section{Proof of Theorem \ref{th2}}

Let us recall that the solution $U$ of \eqref{UM} is obtained in \cite[Lemma 3.1]{M2018} as
\begin{equation}\label{Ur}
U(r)=F^{-1}\Big(\frac{1+x(t)}{2(n-2) e^{2t}} \Big) \quad\mbox{ where $t=-\ln r$.}
\end{equation}
Further, the unknowns $x(t)$ and $y(t)=\frac{dx}{dt}(t)$ are found as a unique fixed point through a contraction mapping in the set
$$
B_\varepsilon:=\Big\{(x,y)\in C([T, \infty),\R^2): \|x\|_{L^\infty[T,\infty)}+ \|y\|_{L^\infty[T,\infty)}<\varepsilon\Big\}.
$$
From \eqref{Ur} we find
\begin{equation}\label{sysF}
\left\{
\begin{aligned}
&x(t)=2(n-2)e^{2t} F(U(r))-1,\\
&y(t)=\frac{dx}{dt}(t),
\end{aligned}
\quad \mbox{ where } t=-\ln r>0\mbox{ is large}.
\right.
\end{equation}
Let now $u^*$ be the singular solution of \eqref{main} for $\lambda^*$ constructed in Theorem \ref{th1}. Let $v^*(x)=u^*\big(\frac{x}{\sqrt{\lambda^*}} \big)$ which satisfies \eqref{v}, that is the same equation as \eqref{UM} in $B_{\sqrt{\lambda^*}}$.
Define
\begin{equation}\label{sysF2}
\left\{
\begin{aligned}
&x^*(t)=2(n-2)e^{2t} F(v^*(r))-1,\\
&y^*(t)=\frac{dx^*}{dt}(t),
\end{aligned}
\quad \mbox{ where } t=-\ln r>0\mbox{ is large}.
\right.
\end{equation}
In order to prove that $v^*\equiv U$ in a neighbourhood of infinity it is enough to show that $(x^*(t), y^*(t))$ belongs to $B_\varepsilon$.

Letting $w^*(t)=v^*(r)$ where $t=-\ln r$, we have that $w^*$ satisfies \eqref{w}. Thus, \eqref{sysF2} reads
\begin{equation}\label{sysF3}
\left\{
\begin{aligned}
&x^*(t)=2(n-2)e^{2t} F(w^*(t))-1,\\
&y^*(t)=\frac{dx^*}{dt}(t),
\end{aligned}
\quad \mbox{ where } t=-\ln r>0\mbox{ is large}.
\right.
\end{equation}
In order to conclude the proof it suffices to show that
\begin{equation}\label{limit}
\lim_{t\to \infty}x^*(t)=\lim_{t\to \infty}y^*(t)=0.
\end{equation}
Then, for large $T>0$ we have $(x^*,y^*)\in B_\varepsilon$ so, by the uniqueness of the fixed point $(x^*,y^*)=(x,y)$ in a neighbourhood of infinity, that is $v^*\equiv U$ in a neighbourhood of the origin and then we conclude from Theorem 1.1, Corollary 1.2, Corollary 1.3 from \cite{M2018}. Let us now turn to \eqref{limit}.

From the definition of $F$ in \eqref{F} and L'Hospital's rule we find
\begin{equation}\label{FFlim}
\lim_{t\to \pm \infty}\frac{F(t)}{\exp(-|t|-e^t)}=1.
\end{equation}
Thus,
$$
\begin{aligned}
\lim_{t\to \infty}x^*(t)&=2(n-2)\lim_{t\to \infty} e^{2t}F(w^*(t))-1\\
&=2(n-2)\lim_{t\to \infty} \frac{1}{e^{-2t} \exp(w^*(t)+e^{w^*(t)})}-1.
\end{aligned}
$$
Now, using \eqref{wsol} we find
$$
\begin{aligned}
e^{-2t} \exp(w^*(t)+e^{w^*(t)})&=\exp\Big(-2t+\ln(2t+\varphi(t))+\eta+(2t+\varphi)e^\eta\Big)\\
&=(2t+\varphi(t))e^{ \varphi(t)} \exp\Big(\eta+(e^\eta-1)(2t+\varphi)\Big)\\
&=(2t+\varphi(t))\frac{n-2}{t}\Big(1+\frac{\ln t}{2t}\Big) \exp\Big(\eta+(e^\eta-1)(2t+\varphi)\Big).
\end{aligned}
$$
Since $\eta(t)=O(t^{-2})$ as $t\to \infty$ it follows that
$$
\lim_{t\to \infty} e^{-2t} \exp(w^*(t)+e^{w^*(t)})=2(n-2),
$$
which proves the first part of \eqref{limit}. For the second part, we first note that
$$
y^*(t)=\frac{dx^*}{dt}(t)
=4(n-2)e^{2t}F(w^*(t))-2(n-2)\frac{e^{2t}w_t^*(t)}{\exp(e^{w^*(t)})}.
$$
Thus, from the above arguments we find
\begin{equation}\label{lim1}
\lim_{t\to \infty}y^*(t)=2-2(n-2) \lim_{t\to \infty} \frac{tw_t^*(t)}{te^{-2t}\exp(e^{w^*(t)})}.
\end{equation}
Using \eqref{wsol} and Lemma \ref{ett} we have
\begin{equation}\label{lim2}
\lim_{t\to \infty} tw^*_t(t)=1.
\end{equation}
We also have
$$
\begin{aligned}
te^{-2t}\exp(e^{w^*(t)})&=t\exp\Big(-2t+(2t+\varphi(t))e^{\eta(t)}\Big)\\
&=te^{\varphi(t)}\exp\Big((e^\eta-1)(2t+\varphi(t))\Big).
\end{aligned}
$$
Since $\eta(t)=O(t^{-2})$ as $t\to \infty$, we find
\begin{equation}\label{lim3}
\lim_{t\to \infty} te^{-2t}\exp(e^{w^*(t)})=n-2.
\end{equation}
Combining \eqref{lim1}, \eqref{lim2} and \eqref{lim3} we deduce $\lim_{t\to \infty}y^*(t)=0$ which finishes our proof. \qed

\section{Proof of Theorem \ref{th3}}

We follow the lines of the proof of Theorem \ref{th1}. For $m\geq 2$,
we plan to solve here the equation
\begin{equation}\label{wm}
\left\{
\begin{aligned}
&w_{tt}-(n-2) w_t+\exp(-2t+G_m(w))=0, \;w>0\quad\mbox{ for all }\, -\ln \sqrt{\lambda}<t<\infty,\\
&w(-\ln \sqrt{\lambda})=0.
\end{aligned}
\right.
\end{equation}
We look for a solution $w(t)$ of \eqref{wm} in the form
\begin{equation}\label{wsolm}
w(t)=H_m\big(2t+\varphi(t)\big)+\eta(t),
\end{equation}
where
$$
\varphi(t)=\ln \big( 2(n-2)H'_m(2t) \big).
$$
Here $H_m$ is the iterated logarithm function defined by $H_m(G_m(y))=y$.
We then have to solve the equation
\begin{equation}\label{etam}
\eta_{tt}-(n-2) \eta_t+2(n-2)\eta+F(t, \eta)=0, \quad\mbox{ for all }\, -\ln \sqrt{\lambda}<t<\infty,
\end{equation}
where
$$
F(t, \eta)=F_1(t) \eta+F_2(t, \eta)+F_0(t),
$$
\noindent with

\begin{align}
F_0(t)&=2(n-2)(H'_m(2t)-H_m'(2t+\varphi))-(n-2)\varphi_tH'_m(2t+\varphi)+
\big(H_m(2t+\varphi)\big)_{tt},\label{ff1m}\\
F_1(t) \eta&=\big( e^\varphi G'_m(H_m(2t+\varphi))-2(n-2)\big)\eta,\label{ff2m}\\
F_2(t, \eta)&=e^\varphi \left( \exp\big(G'_m(H_m(2t+\varphi)\eta\big)+\rho(\eta))-1-G'_m(H_m(2t+\varphi))\eta\right).\label{ff3m}
\end{align}
\noindent Here $\rho(\eta)$ is defined thanks to Taylor's formula as
\begin{equation}\label{ro}
G_m(w)=G_m(H_m(2t+\varphi)+\eta)=(2t+\varphi)+G'_m(H_m(2t+\varphi))\eta+\rho(\eta).
\end{equation}

The next result provides the estimates we need to construct our solution $\eta$ in the space  $\Sigma_M$ defined in \eqref{eqst}.

\begin{lemma}\label{lestimate}
Let $m\geq 2$.
\begin{enumerate}
\item[(a)] For $k=1,2,3$ we have
\begin{equation}\label{estq1}
H_m^{(k)}(t)=O\left(\frac{1}{t^k\ln t}  \right)
\end{equation}
and
\begin{equation}\label{estq2}
\frac{H''_m(t)}{H'_m(t)}=-\frac{1}{t}+O\left(\frac{1}{t\ln t}\right).
\end{equation}
\item[(b)] Also
\begin{equation}\label{estq3}
\varphi(t)\simeq -\ln t,\;\;  \varphi_t(t)=-\frac{1}{t}+O\left( \frac{1}{t\ln t}\right), \;\; \varphi_{tt}(t)=O\left( \frac{1}{t^2}\right).
\end{equation}
\item[(c)] For $k=1,2,3$ there holds
\begin{equation}\label{estq4}
G_m^{(k)}\big(H_m(t)\big)=O\left(t(\ln t)^{k+1}  \right).
\end{equation}
\end{enumerate}
\end{lemma}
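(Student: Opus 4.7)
The plan is to prove all three parts by working with explicit recursive formulas coming from the chain rule applied to $H_m=\ln H_{m-1}$ and $G_m=\exp(G_{m-1})$, together with easy inductions on $m$.

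For part (a), the starting observation is that $H_m'=H_{m-1}'/H_{m-1}$, and a straightforward induction (with the convention $H_0(t)=t$) yields the closed formula
\begin{equation*}
H_m'(t)=\frac{1}{t\,H_1(t)\,H_2(t)\cdots H_{m-1}(t)}\qquad(m\geq 1).
\end{equation*}
Since $H_1(t)=\ln t$ and each other $H_j$ tends to infinity, this gives $H_m'(t)=O(1/(t\ln t))$, proving \eqref{estq1} for $k=1$. Logarithmic differentiation of this product representation produces
\begin{equation*}
\frac{H_m''(t)}{H_m'(t)}=-\frac{1}{t}-\sum_{j=1}^{m-1}\frac{H_j'(t)}{H_j(t)}=-\frac{1}{t}-\sum_{j=1}^{m-1}H_{j+1}'(t),
\end{equation*}
and each summand on the right is $O(1/(t\ln t))$ by the bound just proved, giving \eqref{estq2} at once. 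Multiplying through by $H_m'$ gives the $k=2$ case of \eqref{estq1}; one more differentiation, with every new term controlled by the same argument, handles $k=3$.

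Part (b) is pure substitution from (a). Writing $\varphi(t)=\ln(2(n-2))+\ln H_m'(2t)$ and expanding $\ln H_m'(2t)$ via the product formula yields $\varphi(t)\simeq -\ln(2t)\simeq -\ln t$. Differentiation gives $\varphi_t(t)=2H_m''(2t)/H_m'(2t)$, which by \eqref{estq2} equals $-1/t+O(1/(t\ln t))$, and a second differentiation produces
\begin{equation*}
\varphi_{tt}(t)=4\left[\frac{H_m'''(2t)}{H_m'(2t)}-\left(\frac{H_m''(2t)}{H_m'(2t)}\right)^{2}\right]=O\!\left(\frac{1}{t^{2}}\right)
\end{equation*}
by \eqref{estq1}--\eqref{estq2}.

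For part (c), the two key identities are $G_m(H_m(t))=t$ and the recursion $G_m'=G_m\cdot G_{m-1}'$. Differentiating the former gives $G_m'(H_m(t))=1/H_m'(t)=tH_1(t)\cdots H_{m-1}(t)$, which is $O(t(\ln t)^{2})$ since $H_1(t)=\ln t$ and all later $H_j$'s are dominated by $\ln\ln t$, hence by any positive power of $\ln t$. Iterating the recursion produces $G_m'(y)=\prod_{i=1}^{m}G_i(y)$ and therefore, via logarithmic differentiation, $G_m''/G_m'=\sum_{j=0}^{m-1}G_j'$. Using $G_i(H_m(t))=H_{m-i}(t)$, the sum is dominated by $G_{m-1}'(H_m(t))=\prod_{j=1}^{m-1}H_j(t)=O((\ln t)^{2})$, so that $G_m''(H_m(t))=O(t(\ln t)^{3})$. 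Differentiating once more and tracking the new factors by the same product bound gives the $k=3$ case. The principal technical obstacle is the bookkeeping in this last part: one must identify, in every product that appears, which factors contribute a genuine $\ln t$ and which are absorbed into $(\ln t)^{\varepsilon}$ via the inequality $H_j(t)\leq(\ln t)^{\varepsilon}$ for $j\geq 2$ and $t$ large, in order to land precisely on the claimed exponents $(\ln t)^{k+1}$.
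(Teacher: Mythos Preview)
Your proposal is correct and follows essentially the same approach as the paper. Both arguments rest on the product formulas $H_m'(t)=\prod_{j=0}^{m-1}1/H_j(t)$ and $G_m'(y)=\prod_{j=1}^{m}G_j(y)$ obtained by iterating the chain rule, then use logarithmic differentiation (equivalently, the paper's identities \eqref{estq11}--\eqref{estq13}) to reach the higher-order estimates; your treatment of part~(c) via the exact relation $G_m''/G_m'=\sum_{j=0}^{m-1}G_j'$ is a minor sharpening of the paper's cruder inequalities $G_m^{(k)}\le m^{k-1}G_m\big(\prod_{j=0}^{m-1}G_j\big)^{k}$, but the bookkeeping with the factors $H_j(t)\le(\ln t)^{\varepsilon}$ for $j\ge 2$ is the same in both.
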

\begin{proof}
(a) The estimates \eqref{estq1}-\eqref{estq2} follow from  the identities
\begin{align}
H'_m(t)&= \prod_{j=0}^{m-1} \frac{1}{H_j(t)},\label{estq11}\\
H''_m(t)&= -H'_m(t)\sum_{j=0}^{m-1}\frac{H'_j(t)}{H_j(t)},\label{estq12}\\
H'''_m(t) &=-H''_m(t)\sum_{j=0}^{m-1}\frac{H'_j(t)}{H_j(t)}+H'_m(t)\sum_{j=0}^{m-1}\left\{ \Big[\frac{H'_j(t)}{H_j(t)}\Big]^2-\frac{H''_j(t)}{H_j(t)}\right\}.\label{estq13}
\end{align}

(b) We have
\begin{equation}\label{fifi}
\varphi(t)=\ln(2(n-2))-\sum_{j=0}^{m-1}\ln(H_j(2t))\simeq -\ln t.
\end{equation}
From \eqref{estq2} we find
$$
\varphi_t(t)=2\frac{H''_m(2t)}{H'_m(2t)}=-\frac{1}{t}+O\left( \frac{1}{t\ln t}\right).
$$
Combining the above equality with \eqref{estq2} we find
$$
\begin{aligned}
\varphi_{tt}(t)&=-2\frac{d}{dt}\left\{ \sum_{j=0}^{m-1}\frac{H'_j(2t)}{H_j(2t)}  \right\}\\
&=4 \sum_{j=0}^{m-1}\left\{ \Big[\frac{H'_j(2t)}{H_j(2t)}\Big]^2-\frac{H''_j(2t)}{H_j(2t)}\right\}\\
&=O\left( \frac{1}{t^2}\right).
\end{aligned}
$$
(c) The estimate \eqref{estq4} follows from  the following computations
\begin{align*}
0\leq G'_m(t)&= G_m(t) \prod_{j=0}^{m-1} G_j(t),\\
0\leq G''_m(t)&\leq  m G_m(t) \Big(\prod_{j=0}^{m-1} G_j(t)\Big)^2,\\
0\leq G'''_m(t)&\leq  m^2 G_m(t) \Big(\prod_{j=0}^{m-1} G_j(t)\Big)^3.
\end{align*}
\end{proof}

\noindent We seek a solution $\eta\in \Sigma_M$ by Lemma \ref{contraction} as in the proof of Theorem \ref{th1}.

We begin with an upper bound for $F_1(t)$ defined in \eqref{ff2m}.
By Mean Value theorem we have that
\begin{equation}\label{og1}
\begin{aligned}
|F_1(t)|&=2(n-2) \frac{|H'_m(2t)-H'_m(2t+\varphi)|}{H'_m(2t+\varphi)}\\
&=2(n-2)\frac{|H''_m(s)|}{H'_m(2t)}\;|\varphi(t)|,
\end{aligned}
\end{equation}
for some $s$ between $2t+\varphi$ and $2t$. Since $s\mapsto |H''_m(s)|$ is decreasing and $\varphi(t)$ is negative for large $t>0$, from \eqref{og1} and \eqref{estq2}-\eqref{estq3} we find
\begin{equation}\label{og1.1}
\begin{aligned}
|F_1(t)|&=2(n-2) \frac{|H''_m(2t)|}{H'_m(2t)}\;|\varphi(t)|\leq c_{n,m} \frac{\ln t}{t}.
\end{aligned}
\end{equation}

We now bound $F_0(t)$ defined in \eqref{ff1m}. First, by the estimates \eqref{estq1} we have
\begin{equation}\label{estw1}
\left |H'_m(2t+\varphi)\varphi_t)+\Big((H_m(2t+\varphi)\Big)_{tt} \right| \leq c_m t^{-2}.
\end{equation}
Also, by the Mean Value Theorem and \eqref{estq1} we have
\begin{equation}\label{estw2}
\big| H'_m(2t)-H_m'(2t+\varphi)\big|\leq c|\varphi| |H''_m(s)|\leq \frac{c}{t^2},
\end{equation}
for some $s$ between $2t+\varphi(t)$ and $\varphi(t)$.
Hence, from \eqref{estw1}-\eqref{estw2} we find
\begin{equation}
|F_0(t)|\leq \frac{c}{t^2}.
\end{equation}

We now handle the nonlinear term $F_2(t,\eta)$ defined in \eqref{ff3m}. Observe first that $\rho(\eta)$ given in \eqref{ff3m}-\eqref{ro} can be written as
\begin{equation}\label{ro0}
\rho(\eta)=\eta^2 \int_0^1 G_m''(H_m(2t+\varphi)+ z\eta)(1-z)dz .
\end{equation}
Since $\varphi(r)\simeq -\ln t$ and $\eta\in \Sigma_M$, a further adjustment of $M$ and Mean Value Theorem together with \eqref{estq1} yields
$$
H_m(2t+\varphi)+ z\eta\leq H_m(2t)\quad\mbox{ for all }z\in [0,1].
$$
Since $G_m''$ is increasing, using \eqref{estq4} we deduce
\begin{equation}\label{roest1}
|\rho(\eta(t))|\leq \eta^2(t)G''_m(H_m(2t+\varphi(t))\leq \frac{c(\ln t)^3}{t^3}\quad\mbox{ for any } \eta\in \Sigma_M.
\end{equation}
Let now $\eta_1,\eta_2\in \sigma_M$ and denote $\theta_j(t)=\frac{\eta_j(t)}{H'_m(2t+\varphi(t)) }$. Using that
\begin{equation}\label{h}
\frac{1}{ c_m } \frac{1}{t (\ln t)^2}\leq H'_m(2t+\varphi)\leq c_m \frac{1}{t \ln t},
\end{equation}
we have
\begin{equation}\label{teta1}
|\theta_j(t)|\leq \frac{c(\ln t)^2}{t}.
\end{equation}
Since $G_m$ and $H_m$ are inverse each other, for $j=1,2$ we may write
$$
F_2(t,\eta_j)=e^\varphi\left(e^{\theta_j}-1-\theta_j\right)+e^{\varphi +\theta_j} (e^{\rho(\eta_j)}-1).
$$
Thus
\begin{equation}\label{teta2}
|F_2(t,\eta_2)-F_2(t,\eta_1)|\leq
e^\varphi\left|(e^{\theta_2}-1-\theta_2)-(e^{\theta_1}-1-\theta_1)\right|
+ e^{\varphi}\left| e^{\theta_2} (e^{\rho(\eta_2)}-1)-e^{\theta_1} (e^{\rho(\eta_1)}-1)   \right|.
\end{equation}
By the Mean Value Theorem, \eqref{h} and \eqref{teta1}, for some $\theta(t)$ between $\theta_1(t)$ and $\theta_2(t)$, we have
\begin{equation}\label{teta3}
\begin{aligned}
e^\varphi\left|(e^{\theta_2}-1-\theta_2)-(e^{\theta_1}-1-\theta_1)\right|&
\leq  e^\varphi |e^\theta-1||\theta_2-\theta_1|\\
&\leq \frac{C|\theta|}{t H_m'(2t+\varphi)} |\eta_2-\eta_1|\\
&\leq \frac{c(\ln t)^4}{t} |\eta_2-\eta_1|.
\end{aligned}
\end{equation}
Now,  the second term in \eqref{teta2} is bounded from above by
\begin{equation}\label{teta4}
e^{\varphi +\theta_2} |(e^{\rho(\eta_2)}-e^{\rho(\eta_1)}|+e^\varphi|(e^{\rho(\eta_1)}-1)(e^{\theta_2}-e^{\theta_1} )|.
\end{equation}
Let us observe first that from \eqref{roest1}-\eqref{teta1} and the Mean Value Theorem we have
\begin{equation}\label{teta5}
e^\varphi |(e^{\rho(\eta_1)}-1)(e^{\theta_2}-e^{\theta_1} )|\leq \frac{C e^\theta  \rho(\eta_1)}{t} |\theta_2-\theta_1|\leq  \frac{(\ln t)^4}{t^3} |\eta_1-\eta_2|.
\end{equation}
To bound from above the first term in \eqref{teta4} we have
\begin{equation}\label{teta6}
e^{\varphi +\theta_2} |e^{\rho(\eta_2)}-e^{\rho(\eta_1)}|\leq\frac{ c }{t} |\rho(\eta_2)-\rho(\eta_1)|.
\end{equation}
For $\rho\in \Sigma_M$, from \eqref{ro0} we have
$$
\rho(\eta)=\eta^2 \int_0^1 A(z, \eta)dz\quad\mbox{ where }\; A(z, \eta)=G_m''(H_m(2t+\varphi)+ z\eta)(1-z).
$$
Then
$$
|\rho(\eta_2)-\rho(\eta_1)|\leq \int_0^1\Big[|A(z, \eta_2)||\eta_2+\eta_1||\eta_2-\eta_1|+|\eta_1|^2|A(z, \eta_2)-A(z, \eta_1)|\Big] dz.
$$
As before,
$$
|A(z, \eta_2)|\leq G_m''(H_m(2t+\varphi)+ z\eta)\leq G_m''(H_m(2t))\leq ct(\ln t)^3.
$$
Also, by the Mean Value Theorem, for some $\eta$ between $\eta_1$ and $\eta_2$ we have
$$
\begin{aligned}
|A(z, \eta_2)-A(z, \eta_1)|&=|\partial_\eta A(z, \eta)||\eta_2-\eta_1|\leq
G_m'''(H_m(2t))\leq ct(\ln t)^4.
\end{aligned}
$$
The last two estimates yield
\begin{equation}\label{teta7}
|\rho(\eta_2)-\rho(\eta_1)|\leq \frac{c(\ln t)^3}{t}|\eta_2-\eta_1|.
\end{equation}
Finally, combining \eqref{teta2}-\eqref{teta7} we deduce
$$
|F_2(t,\eta_2)-F_2(t,\eta_1)|\leq \frac{c(\ln t)^4}{t} |\eta_2-\eta_1|.
$$
Setting $\eta_2=0$ gives $F_2(t,\eta_1)\leq O(t^{-2})$ and we can appply the Lemma \ref{contraction}.
We then have a unique solution of \eqref{etam} in $\Sigma_M$.

Proceeding as in the proof of Theorem \ref{th1}, we have that there exists a $T^*$ such that
$w(T^*)=0$ and hence we have completed the proof of the existence
and uniqueness result.

We next derive the asymptotic expansion \eqref{asymptm}. We begin with
\begin{equation*}\begin{split}
w(t)=H_m(2t+\varphi)+O\left(\frac{1}{t^2}\right)=H_m(2t)+H'_m(2t)\varphi+\frac12 H''_m(2t)\varphi^2+ O\left(\frac{1}{t^2}\right).
\end{split}\end{equation*}
We have, using \eqref{estq2}, that
$$
H''_m(2t)=-\frac{H'_m(2t)}{2t}+O\left(\frac{1}{t^2(\ln t)^2}\right).
$$
Then
$$
\frac12 H''_m(2t)\varphi^2=-\frac{H'_m(2t)}{4t}\varphi^2+O\left(\frac{1}{t^2}\right)=-\frac{H'_m(2t)}{4t}(\ln t)^2+O\left(\frac{1}{t^2}\right).
$$
Finally, we use use the expansion \eqref{fifi} to deduce \eqref{asymptm}. We now prove the estimate for the gradient.
Exactly as in the proof of Theorem \ref{th1}, we have that $\eta_t=O(\frac{1}{t^2})$.
Then
$$
w_t=H'_m(2t+\varphi)(2+\varphi_t)+O\left(\frac{1}{t^2}\right)=2H'_m(2t+\varphi)+O\left(\frac{1}{t^2}\right),
$$
\noindent since $|\varphi_tH'_m(2t+\varphi)|\leq c t^{-2}$.
We use the expansion
$$
H'_m(2t+\varphi)=H'_m(2t)+ O\left(\frac{1}{t^2}\right),
$$
\noindent since $|H''_m(2t)\varphi|\leq c t^{-2}$.
This completes the proof of Theorem \ref{th3}, using that $|v_r|=|w_t \frac{dt}{dr}|$.

\bibliographystyle{amsplain}

\end{document}